\newtheorem{theorem}{Theorem}
\newtheorem{lemma}[theorem]{Lemma}
\newtheorem{conjecture}[theorem]{Conjecture}
\title{On the chromatic numbers of signed triangular and hexagonal grids}
\author{Fabien Jacques}
\affil{LIRMM, University of Montpellier, CNRS, Montpellier, France}
\date{}
\begin{document}
\maketitle

\begin{abstract}
A signed graph is a simple graph with two types of edges. Switching a vertex $v$ of a signed graph corresponds to changing the type of each edge incident to $v$.

A homomorphism from a signed graph $G$ to another signed graph $H$ is a mapping $\varphi: V(G) \rightarrow V(H)$ such that, after switching any number of the vertices of $G$, $\varphi$ maps every edge of $G$ to an edge of the same type in $H$. The chromatic number $\chi_s(G)$ of a signed graph $G$ is the order of a smallest signed graph $H$ such that there is a homomorphism from $G$ to $H$.

We show that the chromatic number of signed triangular grids is at most 10 and the chromatic number of signed hexagonal grids is at most 4.
\end{abstract}

 \section{Homomorphisms of signed and 2-edge-colored graphs}
\label{sec:intro}

A \textit{2-edge-colored graph} or a \textit{signed graph} $G = (V, E, s)$ is a simple graph $(V, E)$ with two kinds of edges: positive and negative edges. We do not allow parallel edges nor loops. The signature $s: E(G) \rightarrow \{-1, +1\}$ assigns to each edge its sign. For the concepts discussed in this article, 2-edge-colored graphs and signed graphs only differ on the notion of homomorphisms.

Given two $2$-edge-colored graphs $G$ and $H$, the mapping $\varphi : V(G)\rightarrow V(H)$ is a \textit{homomorphism} if $\varphi$ maps every edge of $G$ to an edge of $H$ with the same sign. This can be seen as coloring the graph $G$ by using the vertices of $H$ as colors. The target graph $H$ gives us the rules that this coloring must follow. If vertices $1$ and $2$ of $H$ are adjacent with a positive (resp. negative) edge, then every pair of adjacent vertices in $G$ colored with $1$ and $2$ must be adjacent with a positive (resp. negative) edge. 

\textit{Switching} a vertex $v$ of a $2$-edge-colored or signed graph corresponds to reversing the signs of all the edges that are incident to $v$. 

Given two signed graphs $G$ and $H$, the mapping $\varphi : V(G)\rightarrow V(H)$ is a \textit{homomorphism} if there is a homomorphism from $G$ to $H$ after switching some subset of the vertices of $G$ and/or switching some subset of the vertices of $H$. However, switching in $H$ is unnecessary (as explained in Section 3.3 of \cite{HomSG}).

The \textit{chromatic number} $\chi_s(G)$ of a signed graph $G$ is the order of a smallest signed graph $H$ such that $G$ admits a homomorphism to $H$. 
The chromatic number $\chi_s(\mathcal{C})$ of a class of signed graphs $\mathcal{C}$ is the maximum of the chromatic numbers of the graphs in the class. \newline

Homomorphisms of signed graphs were introduced by Naserasr, Rollová and Sopena \cite{HomSG}. This type of homomorphism allows us to generalize several classical problems such has Hadwiger's conjecture \cite{HomSG, HomSG2} and have therefore been studied by a great number of authors. Here are several known results on the chromatic number of some classes of signed graphs that are related to the classes we study in this article.

\begin{theorem}
\label{thm:art}
\begin{enumerate} 
\item\label{thm:art-planar} The chromatic number of signed planar graphs is at most 40~\cite{OPS}.
\item\label{thm:art-planar6} The chromatic number of signed planar graphs with girth at least 6 is at most 6~\cite{Hom2ec}.
\item\label{thm:art-degree3} The chromatic number of signed graphs with maximum degree 3 is at most 7~\cite{BPS}.
\item\label{thm:art-mad3} The chromatic number of signed graphs with maximum average degree less than 3 is at most 6~\cite{BPS}.
\item\label{thm:art-2Dgrid} The chromatic number of signed square grids is at most 6~\cite{Grid}.
\end{enumerate}
\end{theorem}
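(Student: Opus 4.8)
The plan is to establish each of the five bounds by exhibiting, for the relevant class, a fixed target signed graph $H$ of the claimed order and proving that every member of the class admits a homomorphism to $H$. Throughout I would use the standard two-part strategy for signed homomorphism bounds: first design a target $H$ whose structure is flexible enough that partial homomorphisms extend across the local configurations one expects to meet, and second show that a hypothetical minimal counterexample $G$ (minimizing $|V(G)|+|E(G)|$ among signed graphs in the class admitting no homomorphism to $H$) must contain one of a list of reducible configurations, each of which can be deleted, mapped by minimality, and then re-extended after a suitable switching, contradicting minimality.

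For the sparse classes of items (\ref{thm:art-mad3}) and (\ref{thm:art-planar6}), the natural tool is discharging. For the $\mathrm{mad}<3$ bound I would fix a $6$-vertex target $H$ with strong extension properties — roughly, that any vertex of $G$ of degree at most $2$ can be colored once its neighbors are, regardless of the incident edge signs after switching — and assign each vertex of $G$ an initial charge equal to its degree; the condition $\mathrm{mad}<3$ gives total charge below $3|V(G)|$, so after a suitable redistribution some vertex retains charge below $3$, forcing a sparse local configuration that is reducible against $H$. The girth-$\geq 6$ planar bound then follows as a corollary, since a planar graph of girth at least $6$ satisfies $\mathrm{mad} < \frac{2\cdot 6}{6-2} = 3$, placing it inside the $\mathrm{mad}<3$ class and hence giving the same target of order $6$.

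For the bounded-degree item (\ref{thm:art-degree3}) and the square-grid item (\ref{thm:art-2Dgrid}), the max-average-degree machinery does not apply directly (a square grid has average degree close to $4$), so I would argue more explicitly. For maximum degree $3$ I would build a $7$-vertex target and verify that the local picture around any vertex — at most three neighbors, each contributing one signed incidence type after switching — can always be completed, again via a minimal-counterexample reduction on the few unavoidable low-structure configurations. For square grids I would instead exploit their rigid periodic structure: fix a $6$-vertex target and produce an explicit periodic coloring pattern, namely a fundamental tile that repeats along both grid directions, then check that translating the pattern respects every horizontal and vertical edge with the correct sign after a fixed switching, so that the whole grid maps to $H$.

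The genuinely hard case is the planar bound of item (\ref{thm:art-planar}). Here no sparsity hypothesis is available, so the discharging argument must run against the full planar structure and the flexibility requirements force a much larger target; the order $40$ reflects exactly this. I expect the main obstacle to be simultaneously (a) designing a $40$-vertex target with enough homomorphic flexibility to absorb every reducible configuration, and (b) assembling a discharging scheme, with carefully chosen rules transferring charge between faces and vertices via Euler's formula, whose discharged minimum still certifies that a minimal counterexample contains one of the reducible configurations. Verifying reducibility for each configuration against a $40$-vertex signed target is the most delicate and computation-heavy part of the whole program, and is where I would concentrate the effort.
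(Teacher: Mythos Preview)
The paper does not prove Theorem~\ref{thm:art}; each item is simply quoted from the literature with a citation, and no argument is supplied. There is therefore no proof in the paper to compare your proposal against.

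As a separate matter, your sketch for item~(\ref{thm:art-2Dgrid}) contains a real gap. A signed square grid carries an \emph{arbitrary} signature, and its switching class is determined by the balance of every $4$-face, so there are exponentially many switching-inequivalent signatures on a given underlying grid. A single periodic coloring pattern, even ``after a fixed switching'', cannot handle all of them: different signatures force genuinely different homomorphisms to the target. The arguments that actually work for grids --- including those this paper gives for hexagonal and triangular grids --- first switch to normalize the signature along a spanning substructure and then color greedily row by row, relying on extension properties (of $P_{k,n}$ type) of the target graph rather than on any fixed tiling. Your outlines for items~(\ref{thm:art-planar6})--(\ref{thm:art-mad3}) are reasonable high-level descriptions of the standard approaches, and the derivation of (\ref{thm:art-planar6}) from (\ref{thm:art-mad3}) via $\mathrm{mad}<2g/(g-2)$ is correct; item~(\ref{thm:art-planar}) is, as you acknowledge, a statement of where the difficulty lies rather than a plan.
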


In \Cref{sec:results} we present our results on the chromatic number of hexagonal and triangular grids and in \Cref{sec:targets} we introduce the target graphs that we use in \Cref{sec:tri,sec:hex} to prove these results.

\section{Results}
\label{sec:results}
A square (resp. triangular, hexagonal) grid is a finite induced subgraph of the graph associated with the tiling of the plane with squares (resp. equilateral triangles, equilateral hexagons). See \Cref{fig:tri_grid,fig:hex_grid}.
Since signed hexagonal grids have maximum degree~3 and therefore maximum average degree at most~3, we already know that their chromatic number is at most $6$ 
by Theorem~\ref{thm:art}(\ref{thm:art-degree3}) or by Theorem~\ref{thm:art}(\ref{thm:art-mad3}). Moreover, signed triangular grids are planar and have therefore chromatic number at most $40$ by Theorem~\ref{thm:art}(\ref{thm:art-planar}). We improve these bounds as follows.

\begin{theorem}
\label{thm:hex}
The chromatic number of signed hexagonal grids is 4.
\end{theorem}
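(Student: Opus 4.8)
The plan is to prove $\chi_s\ge 4$ and $\chi_s\le 4$ separately for the class of signed hexagonal grids; the lower bound is short and the upper bound carries essentially all the work.

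\textbf{Lower bound.} It suffices to produce one signed hexagonal grid with no homomorphism to any signed graph of order at most $3$. I would take $G_0$ to be a single hexagon of the tiling carrying exactly one negative edge: the vertex set of one face induces a chordless $6$-cycle, so this is a bona fide hexagonal grid, and it is unbalanced. Up to switching and isomorphism there are only a handful of signed graphs on at most $3$ vertices (the edgeless ones, the single positive edge, the positive path $P_3$, the balanced triangle, and the unbalanced triangle; every other signature switches to one of these). A connected signed graph maps to one of the first three only if it is edgeless or a balanced signed bipartite graph; it maps to the balanced triangle only if it is balanced; and — the one point needing a short argument — it maps to the unbalanced triangle only if the signature obtained by reversing every sign is balanced, which for a \emph{bipartite} signed graph is the same as being balanced. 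Hence any connected signed hexagonal grid admitting such a homomorphism is balanced, whereas $G_0$ is connected and unbalanced, so $\chi_s(G_0)\ge 4$.

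\textbf{Upper bound.} Let $H$ be the $4$-vertex target constructed in the next section; I note in passing that, since $H$ must receive $G_0$, it must contain an unbalanced closed walk of length $6$, which already severely constrains it among signed graphs on $4$ vertices (essentially to a signed $K_4$). I would argue by minimal counterexample: let $G$ be a signed hexagonal grid of least order with $G\not\to H$. Since Euler's formula together with girth $6$ forbids a finite subgraph of the hexagonal tiling from being $3$-regular, $G$ has a vertex of degree at most $2$; after removing bridges and vertices of degree at most $1$ (harmless because $H$ is connected), $G$ is a $2$-connected hexagonal region, and a count on its outer boundary cycle, of length $b$, gives exactly $3+b/2$ boundary vertices of degree $2$, organized into maximal threads of degree-$2$ vertices separated by degree-$3$ vertices. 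The core of the proof is then a list of reducible configurations — short threads with prescribed local signatures, small clusters of boundary hexagons, and a few small base grids — each handled by extending a homomorphism of the rest of $G$, possibly after re-routing colors along a thread, together with a discharging argument over the boundary showing that every hexagonal region contains one of these configurations.

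\textbf{Main obstacle.} The real difficulty is that \emph{no} $4$-vertex target admits the naive reduction ``delete a degree-$2$ vertex $v$ with neighbors $u,w$, recolor, and extend'': a direct computation on $K_4$ shows that for every signature some pair of vertices $a,b$ has both of its common neighbors realizing the same $\{+,-\}$-pattern on the edges to $a$ and $b$, so a coloring sending $u,w$ to $a,b$ with mismatched signs on $uv,vw$ admits no completion, and if $\varphi(u)=\varphi(w)$ while $s(uv)\ne s(vw)$ there is no completion at all. Everything therefore hinges on proving that in a minimal counterexample these obstructions never arise — by exploiting that $u$ and $w$ are second neighbors on a common hexagon of $G$ to re-route the homomorphism of $G-v$ along a path, and by tuning the discharging so that only ``completable'' configurations ever need reducing. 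Isolating the correct list of such configurations, and showing by discharging that the hexagonal boundary structure cannot avoid all of them, is where the effort concentrates. (A more computational alternative is to rephrase $G\to H$ as the existence of a proper $4$-coloring of $G$ in which every hexagon carries a prescribed parity of edges between two distinguished color classes, and to build it by Kempe-type recolorings; the obstacle there is the surjectivity of the resulting $\mathbb{F}_2$-linear map onto the space of hexagon-parities.)
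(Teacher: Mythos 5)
Your lower bound is fine: the unbalanced $C_6$ (one negative edge) is a hexagonal grid, and your case analysis over the switching classes of targets on at most $3$ vertices, including the observation that for a bipartite signed graph negating all signs preserves balance, correctly rules out every order-$3$ target. This matches what the paper simply cites.

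The upper bound, however, is where all the content of the theorem lies, and there you have a genuine gap rather than a proof. You never exhibit and verify a concrete $4$-vertex target (you defer to ``the next section''), and the heart of your argument --- ``a list of reducible configurations \dots together with a discharging argument over the boundary showing that every hexagonal region contains one of these configurations'' --- is never specified. Worse, your own ``main obstacle'' paragraph shows that the obvious degree-$2$ reduction fails for \emph{every} signature on $K_4$, so the burden of identifying completable configurations and proving their unavoidability is exactly the part you leave open; as written there is no evidence the plan closes. The paper avoids discharging entirely: it fixes the target $T_4$ (a signed $K_4$ with exactly one negative edge), passes to the antitwinned graph $\rho(T_4)$ via Lemma~\ref{lem:BG}, first switches the grid so that a fixed set of disjoint ``matching'' edges is positive, and then colors the vertices greedily in a linear order, using Property $P_{1,3}$ to color the first vertex of each such edge while steering it away from the pairs $\{1,\overline{4}\},\{\overline{1},4\},\{2,\overline{3}\},\{\overline{2},3\}$, and then the restricted Property $P^*_{2,1}$ to color its partner, which sees two previously colored neighbours. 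In other words, the paper resolves precisely the obstruction you identified by a pre-switching normalization plus a careful choice among the three colors offered by $P_{1,3}$, not by reducibility and discharging; your proposal would need that entire mechanism (or an analogue) spelled out and verified before it could count as a proof of the upper bound.
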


\begin{theorem}
\label{thm:tri}
The chromatic number of signed triangular grids is at most 10.
\end{theorem}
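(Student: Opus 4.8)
The plan is to produce a fixed signed graph $H$ on $10$ vertices --- the appropriate target graph from \Cref{sec:targets} --- and to prove that every signed triangular grid $G$ admits a homomorphism to $H$; by the definition of $\chi_s$ this gives the bound. We may assume $G$ is connected. The homomorphism will be built greedily, one vertex at a time, along a carefully chosen ordering of $V(G)$.

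For the ordering I would use the one inherited from the lattice: process the rows from bottom to top and, within a row, from left to right. A short coordinate computation shows that, with respect to this ordering, every vertex $v$ of $G$ has at most three neighbours that precede it, and when it has exactly three they induce a path $L - B_1 - B_2$ with $v$ adjacent to all of $L, B_1, B_2$ (so $vLB_1$ and $vB_1B_2$ are triangular faces of $G$, while $L$ and $B_2$ are non-adjacent); with fewer preceding neighbours one gets a sub-configuration of this. Hence each step of the construction is: given a homomorphism of $G-v$ that sends $L, B_1, B_2$ to $x, y, z$, extend it to $v$. Using that we are still free to switch $G$ at $v$, this reduces to the following \emph{extension property} that $H$ must have: for every path $x - y - z$ in $H$ (with arbitrary signs, and allowing $x = z$) and every pattern $(\sigma_1, \sigma_2, \sigma_3) \in \{-1,+1\}^3$, there is a common neighbour $w$ of $x, y, z$ with $(s(wx), s(wy), s(wz))$ equal to $(\sigma_1, \sigma_2, \sigma_3)$ or to its negation --- and in fact there should be \emph{at least two} such $w$. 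The steps with fewer than three preceding neighbours impose only weaker versions of this.

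The core of the argument is checking this extension property for the concrete $10$-vertex graph $H$. Here I would exploit whatever symmetry $H$ carries --- a transitive automorphism group, or a Cayley/group description, whichever \Cref{sec:targets} provides --- to reduce the requirement to finitely many orbits of paths $x - y - z$ and then verify each, a finite computation that may well be done by computer. One delicate point needs attention: if, when we reach $v$, its two preceding ends $L$ and $B_2$ have been forced onto the same vertex of $H$, then the signs required on $vL$ and $vB_2$ agree if and only if the $4$-cycle $vLB_1B_2$ of $G$ is balanced, which need not be the case. To avoid this we colour more carefully: when a vertex is assigned its colour it is the ``later end'' of at most one not-yet-treated vertex, so it suffices to forbid one colour of $H$ at each step, and the fact that the extension property always offers at least two valid choices of $w$ lets us do so. (When the relevant $4$-cycle is absent or balanced the identification is harmless, so no colour needs to be forbidden there.)

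I expect the main obstacle to be the choice of $H$ itself: one needs a target of order exactly $10$ whose extension property is robust enough to carry the induction together with the extra slack used to dodge the forbidden identifications, and confirming that a specific candidate $H$ has this property is a substantial --- but finite and essentially routine --- case analysis. With such an $H$ fixed, the degeneracy ordering and the bookkeeping described above assemble the local extensions into the desired global homomorphism $G \to H$.
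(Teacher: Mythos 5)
Your overall architecture (a 10-vertex target, a degeneracy ordering in which each new vertex sees at most three earlier neighbours forming a path $L-B_1-B_2$, and the observation that an unbalanced $4$-cycle $vLB_1B_2$ forbids giving $L$ and $B_2$ the same identity) matches the real difficulties, but the proof hinges on an extension property that you never verify and that is in fact false for the natural --- and the paper's actual --- target. The paper colours triangular grids into $\rho(SP_9^+)$, and Lemma~\ref{lem:PTRSP} only guarantees Property $P_{3,1}$: for a triple of pairwise adjacent images and a prescribed sign pattern there is \emph{at least one} valid extension, and this is tight. Concretely, in $SP_9$ the vertices $0,1,2$ form an all-positive triangle, and the only vertex of $SP_9^+$ joined positively to all three is $\infty$, while no vertex is joined negatively to all three; so in $\rho(SP_9^+)$ (equivalently, in the signed setting where you may still switch at $v$) there is exactly \emph{one} admissible identity for a vertex whose three earlier neighbours received these images with the all-positive pattern. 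Hence the ``at least two valid choices of $w$'' that your bookkeeping needs in order to dodge the one forbidden identity (the identity of the future $B_2$ across an unbalanced $C_4$) is simply not available, and your greedy can get stuck: nothing in your construction prevents the earlier colouring from producing such a rigid triple exactly when the forbidden identity is the unique admissible one. Since you also leave $H$ unspecified and defer the ``substantial but routine'' finite check, the proposal as it stands is not a proof.

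The paper circumvents precisely this obstruction by never constraining a new vertex with three already \emph{fixed} colours. It proceeds by induction on rows: the new row $v_1,\dots,v_n$ sees two fixed colours from the row above at each vertex, and Property $P_{2,4}$ of $\rho(SP_9^+)$ gives four candidates there; one then propagates along the row a list of at least two candidate colours per $v_i$, showing by a finite case analysis (reduced to four sign configurations using edge-transitivity, antiautomorphy and switching at $v_1,v_2$, via Lemma~\ref{lem:BG}) that compatibility with the two-element list of $v_{i-1}$ still leaves at least two options for $v_i$; finally one selects colours backwards from $v_n$ to $v_1$. If you want to salvage your vertex-by-vertex greedy, you would need either a different 10-vertex target whose $\rho$ genuinely has a $P_{3,2}$-type property (none is provided in the paper), or an extra invariant preventing the bad rigid triples from ever occurring --- which is essentially what the paper's two-list propagation achieves.
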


In order to prove these theorems, we will show that every signed hexagonal grid admits a homomorphism to a target graph of order 4 we call $T_4$ (see Figure~\ref{fig:SP_5}) and that every signed triangular grid admits a homomorphism to a target graph of order 10 called $SP_{9}^+$. Constructions of $T_4$ and $SP_9^+$ are explained in Section~\ref{sec:targets}. Note that it is conjectured that every signed planar graph admits a homomorphism to $SP_{9}^+$~\cite{conj}. Theorem~\ref{thm:tri} brings further evidence toward this conjecture.

\section{Target Graphs}
\label{sec:targets}
A $2$-edge-colored graph $(V, E, s)$ is said to be \textit{antiautomorphic} if it is isomorphic to $(V, E, -s)$.

A $2$-edge-colored graph $G = (V, E, s)$ is said to be \textit{$K_n$-transitive} if for every pair of cliques $\{u_1, u_2, \ldots , u_n\}$ and $\{v_1, v_2, \ldots , v_n\}$ in $G$ such that $s(u_i u_j) = s(v_i v_j)$ for all $i \neq j$, there exists an automorphism that maps $u_i$ to $v_i$ for all $i$. 
For $n = 1$ or $2$, we say that the graph is \textit{vertex-transitive} or \textit{edge-transitive}, respectively. 

A 2-edge-colored graph $G$ has \emph{Property $P_{k, n}$} if for every sequence of $k$ distinct vertices $(v_1, v_2, \dots, v_k)$ that induces a clique in $G$ and for every sign vector $(\alpha_1, \alpha_2, ...,$ $\alpha_k) \in \{-1, +1\}^k$ there exist at least $n$ distinct vertices $\{u_1, u_2, ..., u_n\}$ such that $s(v_i u_j) = \alpha_i$ for $1 \leq i \leq k$ and $1 \leq j \leq n$.

\medmuskip=0mu

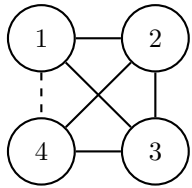
\begin{figure}[H]
	\centering
	\scalebox{1}
	{
		\begin{tikzpicture}[thick]
			\def \radius {3cm}
			\def \margin {8} 
			\tikzstyle{vertex}=[circle,minimum width=2.5em]
			
    		\node[draw, vertex] (4) at (0, 0) {$4$};
    		\node[draw, vertex] (1) at (0, 1.5) {$1$};
    		\node[draw, vertex] (2) at (1.5, 1.5) {$2$};
    		\node[draw, vertex] (3) at (1.5, 0) {$3$};
    		
    		\draw (1) -- (2);
    		\draw (1) -- (3);
    		\draw[dashed] (1) -- (4);
    		\draw (2) -- (3);
    		\draw (2) -- (4);
    		\draw (3) -- (4);
    		
			\end{tikzpicture}
	}
	\caption{The graph $T_4$.}
	\label{fig:SP_5}
\end{figure}

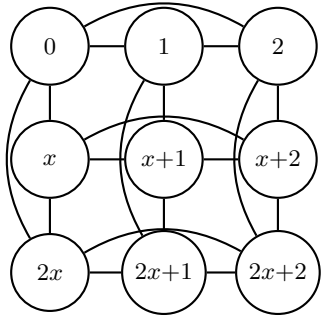
\begin{figure}[H]
	\centering
	\scalebox{1}
	{
		\begin{tikzpicture}[thick]
		\def \radius {1cm}
		\def \margin {8} 
		\tikzstyle{vertex}=[circle,minimum width=2.9em]
		
		\node[draw, vertex] (0) at (0, 3) {\small$0$};
		\node[draw, vertex] (1) at (1.5, 3) {\small$1$};
		\node[draw, vertex] (2) at (3, 3) {\small$2$};
		\node[draw, vertex] (3) at (0, 1.5) {\small$x$};
		\node[draw, vertex] (4) at (1.5, 1.5) {\small$x+1$};
		\node[draw, vertex] (5) at (3, 1.5) {\small$x+2$};
		\node[draw, vertex] (6) at (0, 0) {\small$2x$};
		\node[draw, vertex] (7) at (1.5, 0) {\small$2x+1$};
		\node[draw, vertex] (8) at (3, 0) {\small$2x+2$};

		\draw (0) -- (1);
		\draw (1) -- (2);
		\draw (2) edge[bend right]  (0);
		
		\draw (3) -- (4);
		\draw (4) -- (5);
		\draw (5) edge[bend right]  (3);
		
		\draw (6) -- (7);
		\draw (7) -- (8);
		\draw (8) edge[bend right]  (6);
		
		\draw (0) -- (3);
		\draw (3) -- (6);
		\draw (6) edge[bend left]  (0);
		
		\draw (1) -- (4);
		\draw (4) -- (7);
		\draw (7) edge[bend left]  (1);
		
		\draw (2) -- (5);
		\draw (5) -- (8);
		\draw (8) edge[bend left]  (2);
		
		\end{tikzpicture}
	}
	\caption{The graph $SP_9$, non-edges are negative edges.}
	\label{fig:SP_9}
\end{figure}

\medmuskip=4mu

The \textit{2-edge-colored Paley graph} $SP_9$ has vertex set \\ $V(SP_9) = \mathbb{F}_9$, the field of order 9. Two vertices $u$ and $v \in V(SP_9)$, $u \neq v$, are connected with a positive edge if $u - v$ is a square in $\mathbb{F}_9$ and with a negative edge otherwise.
Notice that this definition is consistent because $9 \equiv 1 \mod 4$ so $-1$ is always a square in $\mathbb{F}_9$ and if $u-v$ is a square then $v-u$ is also a square.

Given a 2-edge-colored graph $G$ with signature $s_G$, we create the \textit{antitwinned graph} of $G$ denoted by $\rho(G)$ as follows:

Let $G^{+1}$, $G^{-1}$ be two copies of $G$. The vertex corresponding to $v \in V(G)$ in $G^{i}$ is denoted by~$v^i$.

\begin{itemize}
\item $V(\rho(G)) = V(G^{+1}) \cup V(G^{-1})$
\item $E(\rho(G)) = \{ u^i v^j : uv \in E(G), \ i, j \in \{-1, +1\} \}$
\item $s_{\rho(G)}(u^i v^j) = i \times j \times s_G(u, v)$
\end{itemize}

By construction, for every vertex $v$ of $G$, $v^{-1}$ and $v^{+1}$ are \textit{antitwins}, the positive neighbors of $v^{-1}$ are the negative neighbors of $v^{+1}$ and vice versa. A 2-edge-colored graph is \textit{antitwinned} if every vertex has a unique antitwin.

When coloring a 2-edge-colored graph with an antitwinned graph, we say that two vertices have the same \textit{identity} if they are mapped to the same vertex or vertices that are antitwinned. In an antitwinned 2-edge-colored graph we denote the antitwin of $v$ with $\overline{v}$.

\begin{lemma}[\cite{HomEG}]
\label{lem:BG}
Let $G$ and $H$ be 2-edge-colored graphs. The two following propositions are equivalent:
\begin{itemize}
\item The 2-edge-colored graph $G$ admits a homomorphism to $\rho(H)$.
\item The signed graph $G'$ defined by the 2-edge-colored graph $G$ admits a homomorphism to the signed graph $H$.
\end{itemize}
\end{lemma}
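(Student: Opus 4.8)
The plan is to prove the two implications separately, each by exhibiting an explicit dictionary between the data ``a switching of $G$ together with a sign-preserving homomorphism to $H$'' and the data ``a homomorphism to the antitwinned graph $\rho(H)$''. The bridge I would use is that every vertex of $\rho(H)$ is uniquely of the form $w^{i}$ with $w\in V(H)$ and $i\in\{-1,+1\}$, and that assigning the color $w^{i}$ to a vertex $v$ of $G$ ought to mean: switch $v$ exactly when $i=-1$, and then color $v$ with $w$ in $H$. In other words, the ``level'' $i_v\in\{-1,+1\}$ encodes membership in the switching set.

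For the direction from $\rho(H)$ to $H$: given a $2$-edge-colored homomorphism $\varphi:G\to\rho(H)$, I would write $\varphi(v)=w_v^{i_v}$, set $S=\{v:i_v=-1\}$, and switch $G$ at $S$. An edge $uv$ of sign $s_G(uv)$ then has sign $i_u i_v\,s_G(uv)$ in the switched graph. On the other hand, the defining identity $s_{\rho(H)}(w_u^{i_u}w_v^{i_v})=i_u i_v\,s_H(w_uw_v)$ together with the fact that $\varphi$ preserves signs gives $s_H(w_uw_v)=i_u i_v\,s_G(uv)$, with $w_uw_v\in E(H)$ (hence $w_u\neq w_v$, since $H$ is simple) because $w_u^{i_u}w_v^{i_v}\in E(\rho(H))$. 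So $v\mapsto w_v$ is a sign-preserving homomorphism from the switched copy of $G$ to $H$, i.e.\ a homomorphism of the signed graph $G'$ to the signed graph $H$. The reverse direction is the same computation read backwards: a signed homomorphism $G'\to H$ is, recalling from the introduction that switching in $H$ is unnecessary, a set $S\subseteq V(G)$ and a sign-preserving map $\psi$ from $G$ switched at $S$ to $H$; then $\varphi(v):=\psi(v)^{i_v}$ with $i_v=-1$ iff $v\in S$ sends each edge of $G$ to an edge of $\rho(H)$ of the same sign, and it is a legitimate homomorphism because $\psi(u)\psi(v)\in E(H)$ forces $\psi(u)^{i_u}\neq\psi(v)^{i_v}$.

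I do not expect a genuine obstacle here: the lemma is essentially a translation between the two frameworks, and its entire content is that the sign identity $s_{\rho(H)}(u^i v^j)=i j\,s_H(uv)$ meshes correctly with the effect of switching on edge signs. The one place I would be careful is in invoking the reduction that it suffices to switch only in $G$ when testing for a signed homomorphism (already recorded in the excerpt, citing \cite{HomSG}), so that the switching set $S$ is precisely the extra datum that must be matched with the choice of level $i_v$ among the two copies $H^{+1},H^{-1}$ of $H$ inside $\rho(H)$.
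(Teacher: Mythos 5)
Your proof is correct. The paper does not actually prove this lemma---it is imported verbatim from \cite{HomEG}---so there is no internal argument to compare against; your verification (encoding the switching set by the level $i_v$ of the copy of $H$ used, and checking that the identity $s_{\rho(H)}(u^i v^j)=ij\,s_H(uv)$ matches the effect of switching on edge signs, in both directions, together with the observation that edges of $\rho(H)$ and of $H$ force the underlying images to be distinct, and the cited fact that switching in $H$ is unnecessary) is exactly the standard argument establishing this equivalence.
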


Given a signed graph $H$, we define $H^+$ to be $H$ with an added universal vertex $\infty$ that is positively connected to all the other vertices (See \Cref{fig:Hplus}). We will use this construction to create the target graph $SP_9^+$ used in Section~\ref{sec:tri}.
\begin{figure}
\centering
\includegraphics[scale=0.2]{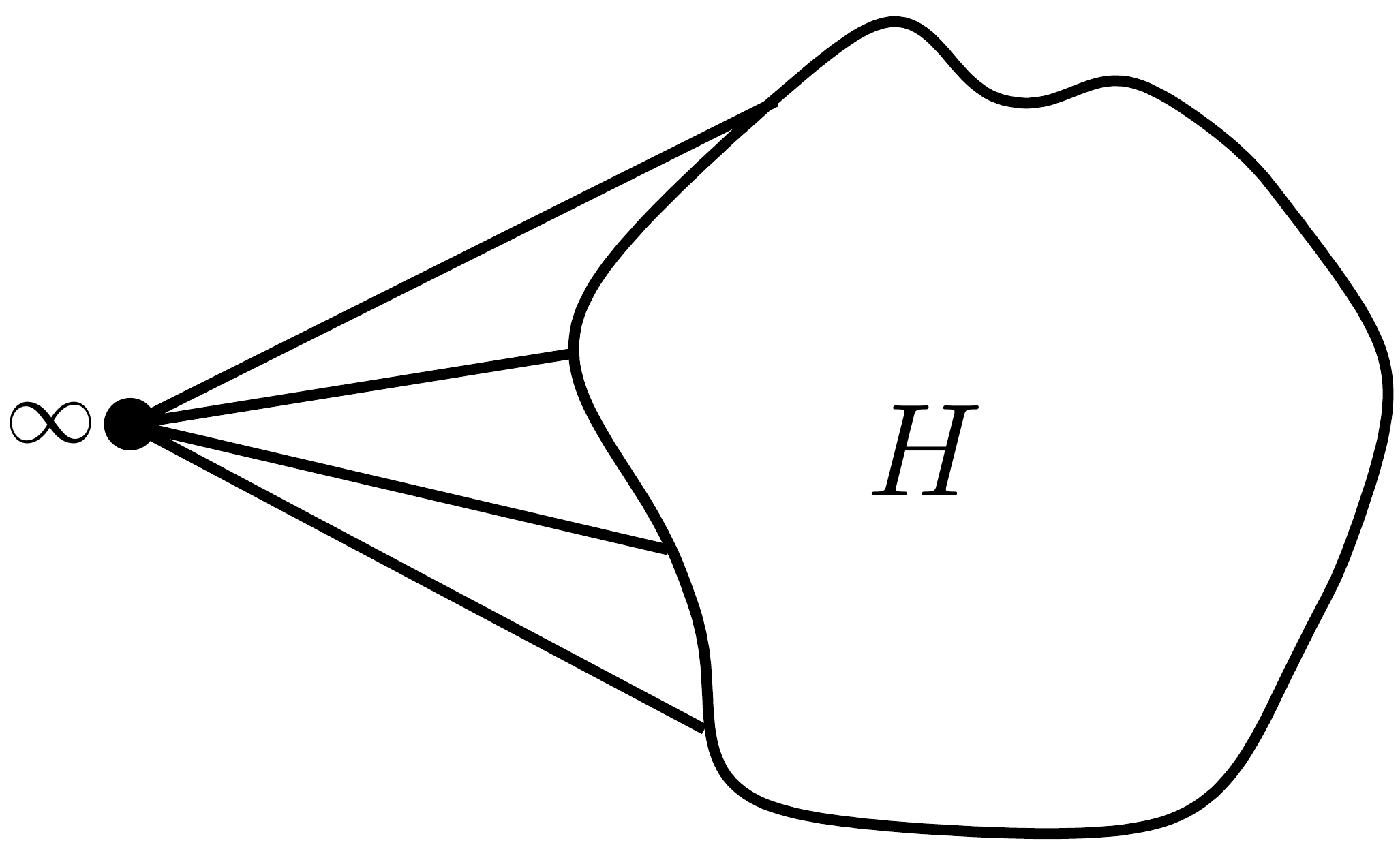}
\caption{The construction of $H^+$.}
\label{fig:Hplus}
\end{figure}
\newline

We will use the following properties of our target graphs to prove our theorems.

\begin{lemma}[\cite{OPS16}]
\label{lem:PTRSP}
The graph $\rho(SP_9^+)$ is vertex-transitive, edge-transitive, antiautomorphic and has Properties $P_{1, 9}$, $P_{2, 4}$ and $P_{3, 1}$.
\end{lemma}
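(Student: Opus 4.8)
\emph{The plan.} I would work in the explicit model $V(\rho(SP_9^+)) = (\mathbb{F}_9 \cup \{\infty\}) \times \{+1,-1\}$, writing $v^i$ for the pair $(v,i)$ and $\chi$ for the quadratic character of $\mathbb{F}_9$, so that an edge $uv$ of $SP_9$ is positive exactly when $\chi(u-v)=1$ and one has $\chi(-1)=1$ and $\chi(t^{-1})=\chi(t)$. In this model every pair of vertices that are not antitwins is adjacent, $s(u^i v^j)=ij\,s_{SP_9^+}(uv)$, and concretely $s(\infty^i z^j)=ij$ for $z\in\mathbb{F}_9$ while $s(x^i y^j)=ij\,\chi(x-y)$ for $x,y\in\mathbb{F}_9$. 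The whole proof then reduces to exhibiting enough automorphisms and carrying out a few sign computations in $\mathbb{F}_9$.

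\emph{Automorphisms.} I would record three kinds of automorphisms of $\rho(SP_9^+)$. First, for each automorphism $\alpha$ of $SP_9^+$ (all of which fix the unique universal vertex $\infty$, namely the only vertex with nine positive neighbours; concrete examples are the maps $z\mapsto az+b$ with $a$ a nonzero square and the Frobenius $z\mapsto z^3$) the diagonal lift $v^i\mapsto\alpha(v)^i$. Second, the antitwin swap $\sigma\colon v^i\mapsto v^{-i}$. Third, the involution $\phi$ given by $\phi(\infty^i)=0^i$, $\phi(0^i)=\infty^i$, and $\phi(z^i)=(z^{-1})^{\,i\chi(z)}$ for $z\in\mathbb{F}_9^{*}$. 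A one-line computation in each case shows that signs are preserved; for $\phi$ one uses $x^{-1}-y^{-1}=(y-x)/(xy)$ together with the two identities for $\chi$ just recorded. The map $\phi$ is the only genuinely non-obvious ingredient: although $\infty$ is distinguished inside $SP_9^+$, $\phi$ carries $\infty^{+1}$ to a finite vertex of $\rho(SP_9^+)$, which is exactly what makes $\rho(SP_9^+)$ vertex-transitive.

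\emph{Transitivity and the antiautomorphism.} The lifted translations $v^i\mapsto(v+b)^i$ act transitively on the finite vertices with a fixed second coordinate, and $\sigma$ interchanges the two second coordinates, so together with $\phi$ these maps act transitively on all of $V(\rho(SP_9^+))$. For edge-transitivity it then suffices, by vertex-transitivity, that the stabiliser of $\infty^{+1}$ act transitively on its nine positive neighbours $\{z^{+1}:z\in\mathbb{F}_9\}$ and on its nine negative neighbours $\{z^{-1}:z\in\mathbb{F}_9\}$; and the lifted affine maps fix $\infty^{+1}$ and act on each of these sets as the transitive affine group on $\mathbb{F}_9$. For the antiautomorphism I would fix a non-square $n$ and take $z^i\mapsto(nz)^i$ on the finite vertices and $\infty^i\mapsto\infty^{-i}$: since $\chi(n)=-1$ this reverses the sign of every edge joining two finite vertices, and the flip of the second coordinate at $\infty$ reverses the sign of every edge at $\infty$, so $\rho(SP_9^+)$ is antiautomorphic.

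\emph{The three properties.} Property $P_{1,9}$ is immediate from vertex-transitivity, since $\infty^{+1}$ has the nine vertices $z^{+1}$ as positive neighbours and the nine vertices $z^{-1}$ as negative neighbours. For $P_{2,4}$, edge-transitivity together with the antiautomorphism reduce the statement to the positive edge $(\infty^{+1},0^{+1})$, whose sixteen common neighbours are exactly the vertices $z^a$ with $z\in\mathbb{F}_9^{*}$; there $(s(\infty^{+1}z^a),s(0^{+1}z^a))=(a,\,a\chi(z))$, which takes each of the four values in $\{+1,-1\}^2$ exactly four times. For $P_{3,1}$ the same reductions leave the triangles $(\infty^{+1},0^{+1},w^b)$ with $w\in\mathbb{F}_9^{*}$ and $b\in\{+1,-1\}$; here a common neighbour $z^a$ with $z\notin\{0,w\}$ has sign vector $(a,\,a\chi(z),\,ab\chi(w-z))$, so $P_{3,1}$ for such a triangle amounts to saying that for every $(\delta_1,\delta_2)\in\{+1,-1\}^2$ there exists $z\notin\{0,w\}$ with $\chi(z)=\delta_1$ and $\chi(w-z)=\delta_2$. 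This last assertion is property $P_{2,1}$ of the Paley graph $SP_9$ itself, and it is where the only real computation lives: it follows from the standard evaluation $\sum_{z\in\mathbb{F}_9}\chi(z(w-z))=-1$ for $w\neq 0$, which makes each of the four relevant counts equal to $1$ or $2$. I expect this character sum bookkeeping, together with checking that edge-transitivity (in the absence of full $K_3$-transitivity) really does bring every triangle to the normal form above, to be the part requiring the most care; the remainder is routine arithmetic in $\mathbb{F}_9$.
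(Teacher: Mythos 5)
Your proposal is correct, but it cannot be compared to an in-paper argument because the paper does not prove this lemma at all: it imports it wholesale from \cite{OPS16}, where the relevant transitivity and $P_{k,n}$ properties of $\rho(SP_q^+)$ are established (for general Paley-type targets, by essentially the same mix of affine/Frobenius automorphisms and quadratic-character counting that you use). What you have written is a self-contained verification for $q=9$, and the details check out: the inversion map $\phi(z^i)=(z^{-1})^{i\chi(z)}$, $\phi(0^i)=\infty^i$, $\phi(\infty^i)=0^i$ does preserve signs (the computation via $x^{-1}-y^{-1}=(y-x)/(xy)$ and $\chi(-1)=1$ is exactly right, and it also sends antitwin pairs to antitwin pairs, so non-adjacency is respected); multiplication by a non-square combined with flipping the copy index at $\infty$ is indeed a sign-reversing automorphism; the stabiliser argument gives edge-transitivity in the ordered sense required by the paper's definition of $K_2$-transitivity; and your reduction of $P_{3,1}$ to triangles $(\infty^{+1},0^{+1},w^b)$ is legitimate even without $K_3$-transitivity, since only the first edge needs to be normalised and the antiautomorphism handles the negative case. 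The final character-sum step is also correct: $\sum_{z}\chi(z(w-z))=-\chi(-1)=-1$ gives the four counts $\{1,2,2,2\}$ (summing to $7$), so each prescribed sign pattern is realised at least once, which is all $P_{3,1}$ needs, while the $P_{2,4}$ count of exactly four per pattern falls out directly. The only thing your route changes relative to the paper is self-containedness versus citation; if anything, your argument is slightly more explicit than what a reader of this paper gets, at the cost of about a page of routine verification in $\mathbb{F}_9$.
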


\begin{lemma}
\label{lem:SP5}
The graph $\rho(T_4)$ has Property $P_{1, 3}$ and the following Property that we call $P^*_{2, 1}$:\newline

Let $u, v \in V(\rho(T_4))$ such that $u \neq v$, $u \neq \overline{v}$ and $\{u, v\} \neq \{\overline{1}, 4\}, \{1, \overline{4}\}, \{\overline{2}, 3\}, \{2, \overline{3}\}$. There exist at least one vertex in $\rho(T_4)$ that is a positive neighbor of both $u$ and $v$.
\end{lemma}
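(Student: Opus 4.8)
The plan is to unwind the definition of $\rho(T_4)$ explicitly and then reduce both properties to a handful of cases using the symmetries of $T_4$. Recall from Figure~\ref{fig:SP_5} that $T_4$ is $K_4$ on $\{1,2,3,4\}$ whose only negative edge is $14$ (the dashed edge). Since $T_4$ is complete, $\rho(T_4)$ is $K_8$ with the perfect matching $\{v\overline{v}:v\in\{1,2,3,4\}\}$ removed, and the sign of an edge is given by $s_{\rho(T_4)}(u^iv^j)=i\cdot j\cdot s_{T_4}(uv)$. First I would record, for each vertex $v$, the set $N^+(v)$ of its positive neighbours; a direct computation from this formula yields
\[
N^+(1)=\{2,3,\overline{4}\},\qquad N^+(2)=\{1,3,4\},\qquad N^+(3)=\{1,2,4\},\qquad N^+(4)=\{\overline{1},2,3\},
\]
the four remaining positive neighbourhoods being obtained by applying the global antitwin swap $v\leftrightarrow\overline{v}$, which is an automorphism of $\rho(T_4)$ because it negates $i$ and $j$ simultaneously and therefore preserves every sign. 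Moreover the transpositions $(1\,4)$ and $(2\,3)$ of $\{1,2,3,4\}$ are automorphisms of $T_4$ and so lift to automorphisms of $\rho(T_4)$; together with the antitwin swap they generate a group $\Gamma$ of order $8$ acting on $V(\rho(T_4))$.

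For Property $P_{1,3}$: every vertex of $\rho(T_4)$ is adjacent to all vertices except its antitwin, hence has degree $6$, so it is enough to show that it has exactly three positive neighbours. Using $\Gamma$ this reduces to the two vertices $1$ and $2$, for which $|N^+(1)|=|N^+(2)|=3$ by the display above. Consequently every vertex has three positive and three negative neighbours.

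For Property $P^*_{2,1}$: I would organise the finite check by the action of $\Gamma$ on the set of admissible pairs $\{u,v\}$ (distinct, non-antitwin, and not one of the four excluded pairs). There are $20$ such pairs: the $6$ pairs inside $\{1,2,3,4\}$, the $6$ pairs inside $\{\overline{1},\overline{2},\overline{3},\overline{4}\}$, and the $16-4-4=8$ remaining ``mixed'' pairs. Under $\Gamma$ the first twelve form three orbits, with representatives $\{1,2\}$, $\{1,4\}$ and $\{2,3\}$, while the eight mixed pairs form a single orbit, with representative $\{1,\overline{2}\}$. It therefore suffices to produce a common positive neighbour for these four representatives, and the display gives $3\in N^+(1)\cap N^+(2)$, $2\in N^+(1)\cap N^+(4)$, $1\in N^+(2)\cap N^+(3)$ and $\overline{4}\in N^+(1)\cap N^+(\overline{2})$, completing the proof.

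The whole argument is mechanical, so the only real pitfalls are getting the signature of $\rho(T_4)$ right — in particular the sign reversal caused by the unique negative edge $14$, which makes $1\overline{4}$ and $\overline{1}4$ positive edges while $14$ and $\overline{1}\,\overline{4}$ are negative — and checking that the four pairs excluded in the statement really are the only admissible-looking pairs without a common positive neighbour, so that the orbit count above is exhaustive. As a sanity check I would verify, for instance, that $N^+(\overline{1})=\{\overline{2},\overline{3},4\}$ and $N^+(4)=\{\overline{1},2,3\}$ are disjoint (and similarly for $\{1,\overline{4}\}$, $\{2,\overline{3}\}$ and $\{\overline{2},3\}$), which shows that these four pairs genuinely have to be excluded and that every other pair is handled by one of the four cases.
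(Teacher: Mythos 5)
Your proof is correct and takes essentially the same route as the paper, which simply asserts that the lemma can be verified by an exhaustive check of $\rho(T_4)$; you carry out that check explicitly, and your positive neighbourhoods, the lift of the automorphisms $(1\,4)$, $(2\,3)$ and the antitwin swap, and the orbit decomposition (three orbits of unmixed pairs plus one orbit of the eight admissible mixed pairs) are all accurate. The verification of the four representatives, together with your sanity check that the four excluded pairs and the antitwin pairs have no common positive neighbour, makes the case analysis exhaustive, so nothing is missing.
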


The last lemma can be checked by going through every pair of adjacent vertices in $\rho(T_4)$.

 \section{Proof of Theorem~\ref{thm:hex}}
\label{sec:hex}

In this section, we prove that the chromatic number of signed hexagonal grids equals 4. To get this result, we first prove that 2-edge-colored hexagonal grids admit a homomorphism to the 2-edge-colored graph $\rho(T_4)$. Lemma~\ref{lem:BG} will allow us to conclude.

\begin{lemma}
\label{lem:rhoT4}
Every 2-edge-colored hexagonal grid admits a homomorphism to the 2-edge-colored graph $\rho(T_4)$.
\end{lemma}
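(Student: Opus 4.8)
The plan is to build the homomorphism greedily, vertex by vertex, processing the vertices of the hexagonal grid in an order in which each newly added vertex has few already-colored neighbors. Since a hexagonal grid has maximum degree 3, and is in fact a planar bipartite graph of girth 6, I expect to be able to find a vertex ordering in which every vertex has at most two neighbors among its predecessors. In fact the crucial structural input is stronger: a hexagonal grid is \emph{2-degenerate}, and moreover one can choose the elimination order so that the two back-neighbors of a vertex $v$ are themselves non-adjacent (they lie at distance $2$ in a hexagon, so they can never be adjacent in a triangle-free grid). So at each step I will have to extend a partial homomorphism to a new vertex $v$ with either zero, one, or two already-colored neighbors, and in the two-neighbor case those two neighbors are non-adjacent in the grid.

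The extension step is exactly what \Cref{lem:SP5} is designed for. If $v$ has no colored neighbor, map it anywhere. If $v$ has one colored neighbor $u$, then by Property $P_{1,3}$ of $\rho(T_4)$ there are at least $3$ candidate images forming the required sign with $\varphi(u)$ (one sign value suffices, since we only need to match the colour of the single incident edge), so an image exists. If $v$ has two colored neighbors $u$ and $w$ (non-adjacent in the grid), I want a vertex of $\rho(T_4)$ that is, say, a positive neighbor of both $\varphi(u)$ and $\varphi(w)$; after switching $v$ appropriately this handles whatever the two incident edge signs are. Property $P^*_{2,1}$ gives such a common positive neighbor \emph{provided} $\varphi(u)\neq\varphi(w)$, $\varphi(u)\neq\overline{\varphi(w)}$, and $\{\varphi(u),\varphi(w)\}$ avoids the four exceptional antitwin-type pairs $\{\overline 1,4\},\{1,\overline 4\},\{\overline 2,3\},\{2,\overline 3\}$.

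The main obstacle, therefore, is to control the partial homomorphism so that when a vertex with two back-neighbors is reached, the images of those two back-neighbors never fall into a forbidden configuration of $P^*_{2,1}$. To do this I will not use a naive greedy order but will exploit the very regular combinatorial structure of the hexagonal tiling: I will process the grid row by row (or brick-wall column by column), so that the "second" back-neighbor of $v$ always sits in a predictable local position relative to the first, and then maintain an \emph{invariant} on the colours used along the current frontier — e.g. that consecutive frontier vertices never receive images that are equal, antitwinned, or one of the four bad pairs. One checks that whenever the invariant holds for the frontier before adding $v$, Property $P^*_{2,1}$ (or $P_{1,3}$) supplies an image for $v$ that re-establishes the invariant for the updated frontier; here the "$3$" in $P_{1,3}$ and the fact that only finitely many pairs are excluded give enough slack to dodge all the bad cases. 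Carrying out this bookkeeping — identifying the right frontier invariant and verifying it is preserved in each of the finitely many local configurations of the hexagonal grid — is the technical heart of the argument; once it is done, applying \Cref{lem:SP5} at each step yields the homomorphism to $\rho(T_4)$, and \Cref{lem:BG} then converts this into the statement of \Cref{thm:hex}.
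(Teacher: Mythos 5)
Your plan is essentially the paper's plan (order the grid row by row, colour greedily, use $P_{1,3}$ when a vertex has one coloured neighbour and $P^*_{2,1}$ when it has two, and maintain a frontier invariant so the exceptional pairs of Lemma~\ref{lem:SP5} never occur), but two load-bearing steps are missing or wrong. The first is the reduction to common \emph{positive} neighbours: switching the new vertex $v$ reverses the signs of \emph{both} of its back-edges simultaneously, so it turns $(-,-)$ into $(+,+)$ but can never turn a mixed pattern $(+,-)$ into $(+,+)$. Since $P^*_{2,1}$ only provides common positive neighbours, your claim that "after switching $v$ appropriately this handles whatever the two incident edge signs are" fails for mixed signs. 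The paper handles this with a preliminary global switching pass over the grid (performed in the same row order, with an explicit case analysis) that makes every relevant back-edge positive before any colouring is done; alternatively one could apply $P^*_{2,1}$ to $(\varphi(u),\overline{\varphi(w)})$, but then the exceptional pairs change representatives and the invariant must be stated at the level of antitwin classes, which your sketch does not do.

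The second gap is that the "technical heart" you defer is precisely what has to be supplied, and your assertion that the $3$ in $P_{1,3}$ gives "enough slack" is not automatic. The paper's invariant is concrete: when colouring a $V_1$-vertex $v_{i,j}$ (one coloured neighbour), its colour is chosen outside the class $\{1,\overline{1},4,\overline{4}\}$ or $\{2,\overline{2},3,\overline{3}\}$ containing $\varphi(v_{i-1,j+1})$, i.e.\ outside the forbidden identity pair $\{1,4\}$ or $\{2,3\}$, because $v_{i,j}$ and $v_{i-1,j+1}$ are exactly the two already-coloured neighbours of the next $V_2$-vertex. To see that one of the three colours guaranteed by $P_{1,3}$ survives this restriction, one needs the further observation that those three colours are pairwise non-antitwinned (they realise three distinct identities), so at most two of them can lie in a forbidden class of four colours; without this remark all three could a priori be excluded (e.g.\ $\{1,\overline{1},4\}$) and the counting collapses. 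As written, your proposal is an outline of the paper's argument with the sign normalisation incorrect and the invariant plus the non-antitwin counting left unproved.
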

\begin{proof}
Let $G$ be a 2-edge-colored hexagonal grid and $s$ be its signature. We want to show that $G$ admits a homomorphism to $\rho(T_4)$. We give two coordinates $i$ and $j$ to each of the vertices of the hexagonal grid according to Figure~\ref{fig:hex_grid}. These coordinates allow us to create an order on the vertex set of $G$ by saying that $v_{i, j} < v_{k, l}$ if $i < k$ or $i = k, j < l$. \newline

\begin{figure}[H]
	\centering
	\scalebox{1}
	{
		\begin{tikzpicture}[thick]
		\def \radius {3cm}
		\def \margin {8} 
		\tikzstyle{vertex}=[circle,minimum width=0.5em]
		
		\node[draw, vertex] (a2) at (1, 0) {};
		\node[draw, vertex] (a3) at (2, 0) {};
		\node[draw, vertex] (a5) at (4, 0) {};
		\node[draw, vertex] (a6) at (5, 0) {};
		
		\node[draw, vertex] (b1) at (0.5, 0.86) {};
		\node[draw, vertex] (b3) at (2.5, 0.86) {};
		\node[draw, vertex] (b4) at (3.5, 0.86) {};
		\node[draw, vertex] (b6) at (5.5, 0.86) {};
		
		\node[draw, vertex] (c2) at (1, 0.86*2) {};
		\node[draw, vertex] (c3) at (2, 0.86*2) {};
        \node[draw, vertex] (c5) at (4, 0.86*2) {};
		\node[draw, vertex] (c6) at (5, 0.86*2) {};
		
		\node[draw, vertex] (d1) at (0.5, 0.86*3) {};
		\node[draw, vertex] (d3) at (2.5, 0.86*3) {};
		\node[draw, vertex] (d4) at (3.5, 0.86*3) {};
		\node[draw, vertex] (d6) at (5.5, 0.86*3) {};
		
		\node[draw, vertex] (e2) at (1, 0.86*4) {};
		\node[draw, vertex] (e3) at (2, 0.86*4) {};
		\node[draw, vertex] (e5) at (4, 0.86*4) {};
		\node[draw, vertex] (e6) at (5, 0.86*4) {};
		
		\draw (e2.north) node[above]{$v_{1, 1}$};
		\draw (e3.north) node[above]{$v_{1, 2}$};
		\draw (e5.north) node[above]{$v_{1, 3}$};
		\draw (e6.north) node[above]{$v_{1, 4}$};
		\draw (c2.north) node[above, shift={(0.06, 0)}]{$v_{3, 1}$};
		\draw (c3.north) node[above, shift={(-0.1, 0)}]{$v_{3, 2}$};
		\draw (c5.north) node[above, shift={(0.06, 0)}]{$v_{3, 3}$};
		\draw (c6.north) node[above, shift={(-0.1, 0)}]{$v_{3, 4}$};
		\draw (a2.north) node[above, shift={(0.06, 0)}]{$v_{5, 1}$};
		\draw (a3.north) node[above, shift={(-0.1, 0)}]{$v_{5, 2}$};
		\draw (a5.north) node[above, shift={(0.06, 0)}]{$v_{5, 3}$};
		\draw (a6.north) node[above, shift={(-0.1, 0)}]{$v_{5, 4}$};
		\draw (d1.north) node[above, shift={(-0.1, 0)}]{$v_{2, 1}$};
		\draw (d3.north) node[above, shift={(0.06, 0)}]{$v_{2, 2}$};
		\draw (d4.north) node[above, shift={(-0.1, 0)}]{$v_{2, 3}$};
		\draw (d6.north) node[above, shift={(0.06, 0)}]{$v_{2, 4}$};
		\draw (b1.north) node[above, shift={(-0.1, 0)}]{$v_{4, 1}$};
		\draw (b3.north) node[above, shift={(0.06, 0)}]{$v_{4, 2}$};
		\draw (b4.north) node[above, shift={(-0.1, 0)}]{$v_{4, 3}$};
		\draw (b6.north) node[above, shift={(0.06, 0)}]{$v_{4, 4}$};
		
		\draw[line width=0.9mm] (a2) -- (a3);
		\draw[line width=0.9mm] (a5) -- (a6);
		\draw[line width=0.9mm] (c2) -- (c3);
		\draw[line width=0.9mm] (c5) -- (c6);
		\draw[line width=0.9mm] (e2) -- (e3);
		\draw[line width=0.9mm] (e5) -- (e6);
		
		\draw[line width=0.9mm] (b3) -- (b4);
		\draw[line width=0.9mm] (d3) -- (d4);
		
		\draw (a2) -- (b1);
		\draw[line width=0.9mm] (a3) -- (b3);
		\draw (a5) -- (b4);
		\draw[line width=0.9mm] (a6) -- (b6);
		
		\draw (c2) -- (d1);
		\draw[line width=0.9mm] (c3) -- (d3);
		\draw (c5) -- (d4);
		\draw[line width=0.9mm] (c6) -- (d6);
		
        \draw[line width=0.9mm] (b1) -- (c2);
        \draw (b3) -- (c3);
        \draw[line width=0.9mm] (b4) -- (c5);
        \draw (b6) -- (c6);
        
        \draw[line width=0.9mm] (d1) -- (e2);
        \draw (d3) -- (e3);
        \draw[line width=0.9mm] (d4) -- (e5);
        \draw (d6) -- (e6);
        
        \draw[dotted] (a2) -- (0.5, -0.86);
        \draw[dotted] (a3) -- (2.5, -0.86);
        \draw[dotted] (a5) -- (3.5, -0.86);
        \draw[dotted] (a6) -- (5.5, -0.86);
        \draw[dotted] (b6) -- (6.5, 0.86);
        \draw[dotted] (d6) -- (6.5, 0.86*3);
		\end{tikzpicture}
	}
	\caption{A hexagonal grid.}
	\label{fig:hex_grid}
\end{figure}
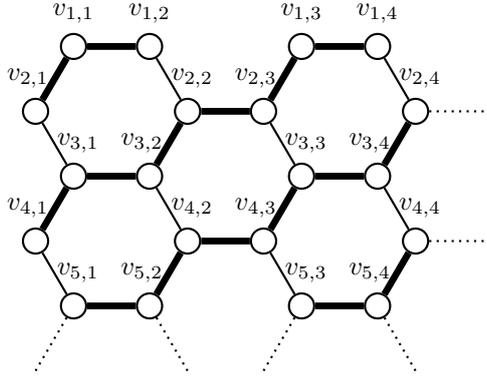 

Let $A$ be a 2-edge-colored graph that admits a homomorphism $\varphi$ to $B$ and let $A'$ be the graph obtained after switching $A$ at a vertex $v$. By Lemma~\ref{lem:BG}, $A'$ admits a homomorphism $\varphi'$ to $I$. It suffices to take $\varphi'$ as follows:
$$
\varphi'(u) = \left\{
    \begin{array}{ll}
        \varphi(u) & \mbox{if } u \neq v \\*[0.5cm]
        \overline{\varphi(v)} & \mbox{otherwise} 
    \end{array}
\right.
$$

Therefore we can, without loss of generality, start by switching $G$ at several vertices such that every edge $v_{i, j} v_{k, l}$ with $i=k$ and $j+1=l$ or $i+1=k$ and $j=l$ are positive (these edges are thicker in Figure~\ref{fig:hex_grid}).

To do that, for every vertex $v_{i, j}$ such that $i+j=1\mod2$ and $i\geq 2$ in the order defined earlier we do the following:
\begin{itemize}
\item If $s(v_{i, j}v_{i-1, j}) = s(v_{i-1, j}v_{i-1, j+1}) = -1$, we switch $G$ at $v_{i-1, j}$.
\item If $s(v_{i, j}v_{i-1, j}) = 1$ and  $s(v_{i-1, j}v_{i-1, j+1}) = -1$, we switch $G$ at $v_{i, j}$ and $v_{i-1, j}$.
\item If $s(v_{i, j}v_{i-1, j}) = -1$ and $s(v_{i-1, j}v_{i-1, j+1}) = 1$, we switch $G$ at $v_{i, j}$.
\end{itemize} 

We now create a homomorphism $\varphi$ from $G$ to $\rho(T_4)$ by coloring each vertex in the order defined earlier. We partition the vertices of $G$ into two sets $V_1$ and $V_2$. In $V_1$ we put every vertex $v_{i, j}$ such that $i+j \equiv 0 \mod 2$. In $V_2$ we put all the other vertices.

When coloring a vertex $v_{i, j}$ in $V_1$, such a vertex is adjacent to one already colored vertex (unless $i = 1$ in which case it is trivial to color $v_{i,j}$). Therefore, Property $P_{1, 3}$ of $\rho(T_4)$ tells us that there are at least $3$ possible colors for $v_{i, j}$ with respect to its already colored neighbor. Note that among these 3 available colors, there is no antitwins. Thanks to this remark, it is always possible to choose one color as follows: 

\begin{itemize}
\item If $\varphi(v_{i-1, j+1}) = 1$ or $\overline{1}$, $\varphi(v_{i, j}) \notin \{1, \overline{1}, 4, \overline{4}\}$;
\item If $\varphi(v_{i-1, j+1}) = 4$ or $\overline{4}$, $\varphi(v_{i, j}) \notin \{1, \overline{1}, 4, \overline{4}\}$;
\item If $\varphi(v_{i-1, j+1}) = 2$ or $\overline{2}$, $\varphi(v_{i, j}) \notin \{2, \overline{2}, 3, \overline{3}\}$;
\item If $\varphi(v_{i-1, j+1}) = 3$ or $\overline{3}$, $\varphi(v_{i, j}) \notin \{2, \overline{2}, 3, \overline{3}\}$;
\end{itemize}

When coloring a vertex $v_{i, j}$ in $V_2$, such a vertex is adjacent to two already colored vertex (unless $i = 1$ or $j=1$ in which cases it is trivial to color $v_{i,j}$). Vertex $v_{i-1, j}$ belongs to $V_1$ and we can therefore use $P^*_{2, 1}$, thanks to the restrictions on $\varphi(v_{i-1, j})$ defined earlier, to find a color for each vertex in $V_2$.
\end{proof}

We use Lemmas~\ref{lem:BG} and~\ref{lem:rhoT4} to prove that the chromatic number of signed hexagonal grids is at most 4. Note that a cycle on 6 vertices with exactly one negative edges needs at least 4 color to be colored \cite{degmax}. Therefore, the chromatic number of signed hexagonal grids is 4.
 \section{Proof of Theorem~\ref{thm:tri}}
\label{sec:tri}
In this section, we prove that the chromatic number of signed triangular grids is at most 10. To get this result, we first prove that 2-edge-colored triangular grids admit a homomorphism to the 2-edge-colored graph $\rho(SP_9^+)$. Lemma~\ref{lem:BG} will allow us to conclude.

\begin{lemma}
\label{lemline width=0.7mmSP9+}
Every 2-edge-colored triangular grid admits a homomorphism to the 2-edge-colored graph $\rho(SP_9^+)$.
\end{lemma}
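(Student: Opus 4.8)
The plan is to follow the same strategy as in the proof of \Cref{lem:rhoT4}: equip the triangular grid with coordinates, fix a linear order on its vertices, switch $G$ into a convenient signature, and then colour the vertices one by one, using the structural properties of $\rho(SP_9^+)$ recorded in \Cref{lem:PTRSP} to show each new vertex is colourable. Concretely, I would give each vertex coordinates $v_{i,j}$ as in \Cref{fig:tri_grid}, so that the neighbours of $v_{i,j}$ are $v_{i,j\pm1}$, $v_{i\pm1,j}$, $v_{i-1,j+1}$ and $v_{i+1,j-1}$, and I would order $V(G)$ lexicographically by $(i,j)$. Exactly as in the proof of \Cref{lem:rhoT4}, switching $G$ at a vertex $v$ only replaces the image of $v$ by its antitwin in any candidate homomorphism to $\rho(SP_9^+)$, so it does not affect whether such a homomorphism exists; hence I may first replace $s$ by any switching-equivalent signature. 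I would use this freedom to arrange that, for every vertex $v_{i,j}$, the edges $v_{i,j}v_{i,j-1}$ and $v_{i,j}v_{i-1,j+1}$ carry the same sign. This is possible because the corresponding system of $\mathbb{Z}_2$-equations on a switching function $\sigma\colon V(G)\to\{-1,+1\}$ is $\sigma(v_{i,j-1})\sigma(v_{i-1,j+1})=s(v_{i,j}v_{i,j-1})\,s(v_{i,j}v_{i-1,j+1})$, and the graph on $V(G)$ recording which pairs of vertices appear together in an equation is a disjoint union of paths, so the system is consistent.

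With this signature fixed, I would colour the vertices in lexicographic order. A vertex on the first row or the first column has at most one already-coloured neighbour, and Property $P_{1,9}$ of $\rho(SP_9^+)$ then leaves at least nine admissible colours. Any other vertex $v_{i,j}$ has precisely the neighbours $A=v_{i,j-1}$, $B=v_{i-1,j}$, $C=v_{i-1,j+1}$ already coloured (with $B$ or $C$ possibly absent near the boundary, which only helps), and in the triangular grid one has $A\sim B$, $B\sim C$ but $A\not\sim C$, so $\{A,B,C\}$ induces a path, never a triangle. The key observation is that $SP_9^+$ is a complete $2$-edge-coloured graph, so $\rho(SP_9^+)$ is the cocktail-party graph $K_{10\times 2}$ equipped with a signature: two of its vertices are non-adjacent only if they are equal or antitwins. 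Hence, if $\varphi(A)$ and $\varphi(C)$ have different identity, then $\{\varphi(A),\varphi(B),\varphi(C)\}$ is a clique and Property $P_{3,1}$ yields a colour for $v_{i,j}$; if $\varphi(A)=\varphi(C)$, then by the choice of signature the sign requirements towards $A$ and $C$ coincide, the constraint collapses to the edge $\{\varphi(A),\varphi(B)\}$, and Property $P_{2,4}$ yields a colour (and when $v_{i,j}$ has only two already-coloured neighbours, $P_{2,4}$ applies directly). The single configuration not resolved automatically is $\varphi(A)=\overline{\varphi(C)}$, which the chosen signature makes unsolvable.

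So the whole argument reduces to never letting $\varphi(v_{i,j-1})$ become the antitwin of $\varphi(v_{i-1,j+1})$; since $v_{i-1,j+1}$ precedes $v_{i,j-1}$, this is a single forbidden colour to be avoided when colouring $v_{i,j-1}$, and symmetrically each dangerous pair is seen from the side of whichever of its two vertices is coloured second. This is the step I expect to be the main obstacle: when a vertex has three already-coloured neighbours, Property $P_{3,1}$ determines its colour uniquely, leaving no slack to dodge a forbidden value on the fly. I would handle it by carrying an invariant along the colouring — roughly, that any two already-coloured vertices that are the two endpoints of the back-path of some not-yet-coloured vertex receive colours of different identity — and by checking, through a finite verification on $\rho(SP_9^+)$ combined with the local structure of the grid around such configurations, that the colour forced by $P_{3,1}$ is automatically compatible with this invariant (intuitively, the prohibited antitwin, although adjacent to the middle vertex of the forcing triangle, violates one of the sign conditions that $P_{3,1}$ imposes). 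If this can be pushed through, then $\varphi$ is a homomorphism from $G$ to $\rho(SP_9^+)$, and \Cref{lem:BG} converts it into a homomorphism of the underlying signed triangular grid to $SP_9^+$, which has order $10$.
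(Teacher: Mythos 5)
There is a genuine gap, and you have in fact pointed at it yourself: your greedy, vertex-by-vertex colouring has no slack precisely where slack is needed. Once a vertex has three already-coloured neighbours whose images form a clique in $\rho(SP_9^+)$, Property $P_{3,1}$ guarantees only \emph{one} admissible colour, so you cannot "avoid" the forbidden value $\overline{\varphi(v_{i-1,j+1})}$ when colouring $v_{i,j-1}$; your proposed fix (an invariant stating that the two end vertices of every future back-path get colours of different identity, verified by some finite check) is neither formulated precisely nor proved, and no structural reason is given why the unique colour forced by $P_{3,1}$ should automatically respect it -- your switching normalisation constrains the signs at $v_{i,j}$, not the signs of the edges $v_{i,j-1}v_{i-1,j}$ and $v_{i-1,j}v_{i-1,j+1}$ that would have to conflict for the "intuitive" argument to work. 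As written, the proof stops exactly at the point where the real work begins. (The preparatory steps are fine: the reduction of switching via \Cref{lem:BG}, the consistency of the $\mathbb{Z}_2$-system on a union of paths, and the observation that the only unsolvable local configuration is $\varphi(A)=\overline{\varphi(C)}$ are all correct.)

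The paper resolves this difficulty by a different mechanism that you may want to compare with. It proceeds by induction on whole rows: assuming the grid minus its last row is coloured, it shows that each vertex $v_k$ of the new row can be assigned a \emph{list of at least two} admissible colours, computed left to right. The propagation step is a finite case analysis: using that $\rho(SP_9^+)$ is edge-transitive and antiautomorphic one may assume $u_2u_3$ is positive with $\varphi'(u_2)=0$, $\varphi'(u_3)=1$, and then for each of the four essentially distinct sign patterns on $u_2v_1,u_2v_2,u_3v_2,v_1v_2$ (the other twelve follow by switching $v_1$ and/or $v_2$) one checks directly that \emph{any} pair of candidate colours for $v_1$ leaves at least two candidates for $v_2$, by Property $P_{2,4}$. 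The colours are then fixed backwards, from $v_n$ down to $v_1$. Keeping two candidates per vertex is exactly the slack your single-colour greedy argument lacks; to rescue your approach you would need either a comparable two-choice invariant along your lexicographic order, or an actual proof of the compatibility claim you left as "if this can be pushed through".
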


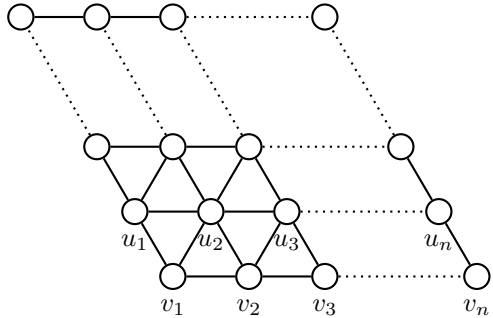
\begin{figure}[H]
	\centering
	\scalebox{1}
	{
		\begin{tikzpicture}[thick]
		\def \radius {3cm}
		\def \margin {8} 
		\tikzstyle{vertex}=[circle,minimum width=0.5em]
		
		\node[draw, vertex] (a1) at (1, 0) {};
		\node[draw, vertex] (a2) at (2, 0) {};
		\node[draw, vertex] (a3) at (3, 0) {};

		\node[draw, vertex] (a5) at (5, 0) {};
		
		\node[draw, vertex] (b1) at (0.5, 0.86) {};
		\node[draw, vertex] (b2) at (1.5, 0.86) {};
		\node[draw, vertex] (b3) at (2.5, 0.86) {};

		\node[draw, vertex] (b5) at (4.5, 0.86) {};
		
		\node[draw, vertex] (c1) at (0, 0.86*2) {};
		\node[draw, vertex] (c2) at (1, 0.86*2) {};
		\node[draw, vertex] (c3) at (2, 0.86*2) {};

		\node[draw, vertex] (c5) at (4, 0.86*2) {};
		
		\node[draw, vertex] (e1) at (-1, 0.86*4) {};
		\node[draw, vertex] (e2) at (0, 0.86*4) {};
		\node[draw, vertex] (e3) at (1, 0.86*4) {};

		\node[draw, vertex] (e5) at (3, 0.86*4) {};
		
		\draw (a1.south) node[below]{$v_1$};
		\draw (a2.south) node[below]{$v_2$};
		\draw (a3.south) node[below]{$v_3$};
		\draw (a5.south) node[below]{$v_n$};
		
		\draw (b1.south) node[below]{$u_1$};
		\draw (b2.south) node[below]{$u_2$};
		\draw (b3.south) node[below]{$u_3$};
		\draw (b5.south) node[below]{$u_n$};
		
		\draw (a1) -- (a2);
		\draw (a2) -- (a3);
	    
	    \draw (b1) -- (b2);
		\draw (b2) -- (b3);
	    
	    \draw (c1) -- (c2);
		\draw (c2) -- (c3);
	    
	    \draw (e1) -- (e2);
		\draw (e2) -- (e3);
	    
	    \draw (a1) -- (b1);
		\draw (a2) -- (b2);
		\draw (a3) -- (b3);
		\draw (a5) -- (b5);
	    
	    \draw (b1) -- (c1);
		\draw (b2) -- (c2);
		\draw (b3) -- (c3);
	    \draw (b5) -- (c5);
	    
		\draw (a1) -- (b2);
		\draw (a2) -- (b3);
	    
	    \draw (b1) -- (c2);
		\draw (b2) -- (c3);
		
		\draw[dotted] (a3) -- (a5);
		\draw[dotted] (b3) -- (b5);
		\draw[dotted] (c3) -- (c5);
		\draw[dotted] (e3) -- (e5);
		
		\draw[dotted] (c1) -- (e1);
		\draw[dotted] (c2) -- (e2);
		\draw[dotted] (c3) -- (e3);
		\draw[dotted] (c5) -- (e5);
		\end{tikzpicture}
	}
	\caption{A triangular grid.}
	\label{fig:tri_grid}
\end{figure}
\begin{proof}

Remember that $SP_9^+$ is $SP_9$ with an added vertex $\infty$ that is positively adjacent to every other vertex. Let $G$ be a 2-edge-colored triangular grid and $s$ be the signature of $G$. We proceed by induction on the horizontal rows of $G$ as depicted in Figure~\ref{fig:tri_grid}. Note that the first row of $G$ is trivial to color.
Let $G'$ be $G$ without the last row. By the induction hypothesis, there is a homomorphism $\varphi'$ from $G'$ to $\rho(SP_9^+)$. We now show that we can extend this homomorphism to a homomorphism $\varphi$ from the whole graph $G$ to $\rho(SP_9^+)$.

Let $v_1, v_2, ..., v_n$ be the vertices of the last row of $G$ and $u_1, u_2, ..., u_n$ be the vertices of the second to last row of $G$ (the last row of $G'$). See Figure~\ref{fig:tri_grid}.

By Property $P_{2, 4}$ of $\rho(SP_9^+)$, we can find two colors (or even four but we only need two) to color $v_1$ with respect to its already colored neighbors ($u_1$ and $u_2$). Note that we do not take the color of $u_3$ into account (yet).

Without loss of generality, we can assume that $u_2 u_3$ is a positive edge, $\varphi'(u_2) = 0$ and $\varphi'(u_3) = 1$ because $\rho(SP_9^+)$ is edge-transitive and antiautomorphic.\newline

Suppose $s(u_2 v_1) = s(u_2 v_2) = s(u_3 v_2) = s(v_1 v_2) = +1$.
The four colors available for $v_2$ by Property $P_{2, 4}$ of $\rho(SP_9^+)$ with respect to the colors of $u_2$ and $u_3$ are $2, \infty, \overline{x+2}$ and $\overline{2x+2}$.

Since $\varphi'(u_2) = 0$ and $s(u_2 v_1) = +1$, the two colors available for $v_1$ belong to the set: $$\{1, 2, x, \overline{x+1}, \overline{x+2}, 2x, \overline{2x+1}, \overline{2x+2}, \infty\}$$

If $v_1$ is colored in $1$, $v_2$ can be colored in $2, \infty, \overline{x+2}$ or $\overline{2x+2}$.

If $v_1$ is colored in $2$, $v_2$ can be colored in $\infty$.

If $v_1$ is colored in $x$, $v_2$ can be colored in $\infty$ or $\overline{x+2}$.

If $v_1$ is colored in $\overline{x+1}$, $v_2$ can be colored in $2$ or $\overline{x+2}$.

If $v_1$ is colored in $\overline{x+2}$, $v_2$ can be colored in $\overline{2x+2}$.

If $v_1$ is colored in $2x$, $v_2$ can be colored in $\infty$ or $\overline{x+2}$.

If $v_1$ is colored in $\overline{2x+1}$, $v_2$ can be colored in $2$ or $\overline{2x+2}$.

If $v_1$ is colored in $\overline{2x+2}$, $v_2$ can be colored in $\overline{x+2}$.

If $v_1$ is colored in $\infty$, $v_2$ can be colored in $2$.\newline

We can now see that any pair of vertices in $\{1, 2, x, \overline{x+1},$ $ \overline{x+2}, 2x, \overline{2x+1}, \overline{2x+2}, \infty\}$ allows $v_2$ to be colored in at least two colors.\newline

We can proceed in a similar manner with the following three cases and arrive to the same conclusion:
\begin{itemize}
\item $s(u_2 v_1) = -1$, $s(u_2 v_2) = s(u_3 v_2) = s(v_1 v_2) = +1$,

\item $s(u_2 v_1) = +1$, $s(u_2 v_2) = -1$,  $s(u_3 v_2) = s(v_1 v_2) = +1$,

\item $s(u_2 v_1) = s(u_2 v_2) = +1$,  $s(u_3 v_2) = -1$,  $s(v_1 v_2) = +1$.
\end{itemize}

By Lemma~\ref{lem:BG}, each of these four cases also accounts for $3$ other cases: the signature obtained after switching at $v_1$, $v_2$ and both $v_1$ and $v_2$. We have therefore covered all 16 ($2^4$) possible signatures of $u_2v_1, u_2 v_2, u_3 v_2$ and $v_1 v_2$.

Therefore, $v_2$ can be colored in at least 2 colors. Similarly, we can find at least two colors for $v_3$ and so on until $v_n$. Finally, we can arbitrarily choose one of these two colors for $v_n$, accordingly choose a color for $v_{n-1}$ and so on to get a homomorphism $\varphi$ from $G$ to $\rho(SP_9^+)$.
\end{proof}

We conclude the proof of Theorem~\ref{thm:tri} by using Lemmas~\ref{lemline width=0.7mmSP9+} and~\ref{lem:BG}. \newline

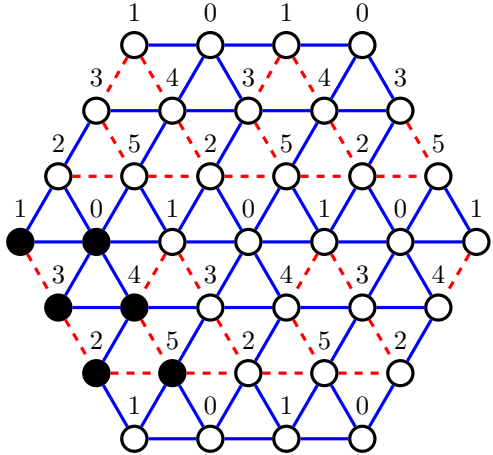
\begin{figure}
	\centering
	\scalebox{1}
	{
		\begin{tikzpicture}[very thick]
		\def \radius {3cm}
		\def \margin {8} 
		\tikzstyle{vertex}=[circle,minimum width=0.5em]
		
		\node[draw, vertex] (0) at (-1.5, 3*0.87) {};
		\node[draw, vertex] (1) at (-0.5, 3*0.87) {};
		\node[draw, vertex] (2) at (0.5, 3*0.87) {};
		\node[draw, vertex] (3) at (1.5, 3*0.87) {};
		
		\node[draw, vertex] (4) at (-2, 2*0.87) {};
		\node[draw, vertex] (5) at (-1, 2*0.87) {};
		\node[draw, vertex] (6) at (0, 2*0.87) {};
		\node[draw, vertex] (7) at (1, 2*0.87) {};
		\node[draw, vertex] (8) at (2, 2*0.87) {};
		
		\node[draw, vertex] (9) at (-2.5, 0.87) {};
		\node[draw, vertex] (10) at (-1.5, 0.87) {};
		\node[draw, vertex] (11) at (-0.5, 0.87) {};
		\node[draw, vertex] (12) at (0.5, 0.87) {};
		\node[draw, vertex] (13) at (1.5, 0.87) {};
		\node[draw, vertex] (14) at (2.5, 0.87) {};
		
		\node[draw, vertex, fill] (15) at (-3, 0) {};
		\node[draw, vertex, fill] (16) at (-2, 0) {};
		\node[draw, vertex] (17) at (-1, 0) {};
		\node[draw, vertex] (18) at (0, 0) {};
		\node[draw, vertex] (19) at (1, 0) {};
		\node[draw, vertex] (20) at (2, 0) {};
		\node[draw, vertex] (21) at (3, 0) {};
		
		\node[draw, vertex, fill] (22) at (-2.5, -0.87) {};
		\node[draw, vertex, fill] (23) at (-1.5, -0.87) {};
		\node[draw, vertex] (24) at (-0.5, -0.87) {};
		\node[draw, vertex] (25) at (0.5, -0.87) {};
		\node[draw, vertex] (26) at (1.5, -0.87) {};
		\node[draw, vertex] (27) at (2.5, -0.87) {};
		
		\node[draw, vertex, fill] (28) at (-2, -2*0.87) {};
		\node[draw, vertex, fill] (29) at (-1, -2*0.87) {};
		\node[draw, vertex] (30) at (0, -2*0.87) {};
		\node[draw, vertex] (31) at (1, -2*0.87) {};
		\node[draw, vertex] (32) at (2, -2*0.87) {};
		
		\node[draw, vertex] (33) at (-1.5, -3*0.87) {};
		\node[draw, vertex] (34) at (-0.5, -3*0.87) {};
		\node[draw, vertex] (35) at (0.5, -3*0.87) {};
		\node[draw, vertex] (36) at (1.5, -3*0.87) {};
		
		\draw (0.north) node[above]{$1$};
		\draw (1.north) node[above]{$0$};
		\draw (2.north) node[above]{$1$};
		\draw (3.north) node[above]{$0$};
		\draw (4.north) node[above]{$3$};
		\draw (5.north) node[above]{$4$};
		\draw (6.north) node[above]{$3$};
		\draw (7.north) node[above]{$4$};
		\draw (8.north) node[above]{$3$};
		\draw (9.north) node[above]{$2$};
		\draw (10.north) node[above]{$5$};
		\draw (11.north) node[above]{$2$};
		\draw (12.north) node[above]{$5$};
		\draw (13.north) node[above]{$2$};
		\draw (14.north) node[above]{$5$};
		\draw (15.north) node[above]{$1$};
		\draw (16.north) node[above]{$0$};
		\draw (17.north) node[above]{$1$};
		\draw (18.north) node[above]{$0$};
		\draw (19.north) node[above]{$1$};
		\draw (20.north) node[above]{$0$};
		\draw (21.north) node[above]{$1$};
		\draw (22.north) node[above]{$3$};
		\draw (23.north) node[above]{$4$};
		\draw (24.north) node[above]{$3$};
		\draw (25.north) node[above]{$4$};
		\draw (26.north) node[above]{$3$};
		\draw (27.north) node[above]{$4$};
		\draw (28.north) node[above]{$2$};
		\draw (29.north) node[above]{$5$};
		\draw (30.north) node[above]{$2$};
		\draw (31.north) node[above]{$5$};
		\draw (32.north) node[above]{$2$};
		\draw (33.north) node[above]{$1$};
		\draw (34.north) node[above]{$0$};
		\draw (35.north) node[above]{$1$};
		\draw (36.north) node[above]{$0$};
		
		\draw[color=blue] (0) -- (1);
		\draw[color=blue] (1) -- (2);
		\draw[color=blue] (2) -- (3);
		\draw[color=blue] (4) -- (5);
		\draw[color=blue] (5) -- (6);
		\draw[color=blue] (6) -- (7);
		\draw[color=blue] (7) -- (8);
		\draw[dashed, color=red] (9) -- (10);
		\draw[dashed, color=red] (10) -- (11);
		\draw[dashed, color=red] (11) -- (12);
		\draw[dashed, color=red] (12) -- (13);
		\draw[dashed, color=red] (13) -- (14);
		\draw[color=blue] (15) -- (16);
		\draw[color=blue] (16) -- (17);
		\draw[color=blue] (17) -- (18);
		\draw[color=blue] (18) -- (19);
		\draw[color=blue] (19) -- (20);
		\draw[color=blue] (20) -- (21);
		\draw[color=blue] (22) -- (23);
		\draw[color=blue] (23) -- (24);
		\draw[color=blue] (24) -- (25);
		\draw[color=blue] (25) -- (26);
		\draw[color=blue] (26) -- (27);
		\draw[dashed, color=red] (28) -- (29);
		\draw[dashed, color=red] (29) -- (30);
		\draw[dashed, color=red] (30) -- (31);
		\draw[dashed, color=red] (31) -- (32);
		\draw[color=blue] (33) -- (34);
		\draw[color=blue] (34) -- (35);
		\draw[color=blue] (35) -- (36);
		
		\draw[dashed, color=red] (15) -- (22);
		\draw[dashed, color=red] (22) -- (28);
		\draw[color=blue] (28) -- (33);
		\draw[color=blue] (9) -- (16);
		\draw[color=blue] (16) -- (23);
		\draw[dashed, color=red] (23) -- (29);
		\draw[color=blue] (29) -- (34);
		\draw[dashed, color=red] (4) -- (10);
		\draw[color=blue] (10) -- (17);
		\draw[dashed, color=red] (17) -- (24);
		\draw[dashed, color=red] (24) -- (30);
		\draw[color=blue] (30) -- (35);
		\draw[dashed, color=red] (0) -- (5);
		\draw[dashed, color=red] (5) -- (11);
		\draw[color=blue] (11) -- (18);
		\draw[color=blue] (18) -- (25);
		\draw[dashed, color=red] (25) -- (31);
		\draw[color=blue] (31) -- (36);
		\draw[color=blue] (1) -- (6);
		\draw[dashed, color=red] (6) -- (12);
		\draw[color=blue] (12) -- (19);
		\draw[dashed, color=red] (19) -- (26);
		\draw[dashed, color=red] (26) -- (32);
		\draw[dashed, color=red] (2) -- (7);
		\draw[dashed, color=red] (7) -- (13);
		\draw[color=blue] (13) -- (20);
		\draw[color=blue] (20) -- (27);
		\draw[color=blue] (3) -- (8);
		\draw[dashed, color=red] (8) -- (14);
		\draw[color=blue] (14) -- (21);
		
		\draw[dashed, color=red] (0) -- (4);
		\draw[color=blue] (4) -- (9);
		\draw[color=blue] (9) -- (15);
		\draw[color=blue] (1) -- (5);
		\draw[color=blue] (5) -- (10);
		\draw[color=blue] (10) -- (16);
		\draw[color=blue] (16) -- (22);
		\draw[dashed, color=red] (2) -- (6);
		\draw[color=blue] (6) -- (11);
		\draw[color=blue] (11) -- (17);
		\draw[dashed, color=red] (17) -- (23);
		\draw[color=blue] (23) -- (28);
		\draw[color=blue] (3) -- (7);
		\draw[color=blue] (7) -- (12);
		\draw[color=blue] (12) -- (18);
		\draw[color=blue] (18) -- (24);
		\draw[color=blue] (24) -- (29);
		\draw[color=blue] (29) -- (33);
		\draw[color=blue] (8) -- (13);
		\draw[color=blue] (13) -- (19);
		\draw[dashed, color=red] (19) -- (25);
		\draw[color=blue] (25) -- (30);
		\draw[color=blue] (30) -- (34);
		\draw[color=blue] (14) -- (20);
		\draw[color=blue] (20) -- (26);
		\draw[color=blue] (26) -- (31);
		\draw[color=blue] (31) -- (35);
		\draw[dashed, color=red] (21) -- (27);
		\draw[color=blue] (27) -- (32);
		\draw[color=blue] (32) -- (36);

		\end{tikzpicture}
	}
	\caption{A signed triangular grid in which every $C_4$ is unbalanced colored with 6 colors.}
	\label{fig:conject}
\end{figure}

We say that a cycle is unbalanced if it has an odd number of negative edges. Every $C_4$ in the signed triangular grid from Figure~\ref{fig:conject} is unbalanced and it can be colored with $6$ colors (note that the resulting target graph is $SP_5^+$). We can easily extend this construction to create a signed triangular grid of any size such that every $C_4$ is unbalanced and it can be colored with $6$ colors. To do so, we can repeat a motif made of six vertices in all directions (the black vertices in Figure~\ref{fig:conject} represent this motif). By Proposition 3.2 of \cite{Z}, this means that every signed triangular grid such that every $C_4$ is unbalanced can be colored with $6$ colors. This is of particular interest because when coloring an unbalanced $C_4$, every vertex in the cycle must have different identities (this is not the case with a $C_4$ that is not unbalanced) and therefore a graph in which every $C_4$ is unbalanced is especially hard to color with few colors. Therefore, we conjecture the following:

\begin{conjecture}
The chromatic number of signed triangular grids is $6$.
\end{conjecture}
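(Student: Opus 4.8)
The conjecture asserts an equality, so the plan splits into an upper bound $\chi_s\le 6$ and a lower bound $\chi_s\ge 6$. For the upper bound the natural route is to repeat the argument of Lemma~\ref{lemline width=0.7mmSP9+} with $SP_9^+$ replaced by the smaller signed Paley graph $SP_5^+$ of order $6$ (here $SP_5$ has vertex set $\mathbb{F}_5$, so it is $K_5$ whose positive and negative edges each form a $5$-cycle): one aims to show that every $2$-edge-colored triangular grid admits a homomorphism to $\rho(SP_5^+)$, and then appeals to Lemma~\ref{lem:BG}. First I would record the symmetry of $\rho(SP_5^+)$ (vertex- and edge-transitivity, antiautomorphy, as for $\rho(SP_9^+)$), note that it has order $12$ with each vertex having exactly five positive neighbours, and determine its best ``Property $P_{2,k}$''. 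Then I would run the row-by-row induction of Figure~\ref{fig:tri_grid} exactly as in the excerpt: when extending a colouring of $G$ minus its last row, each new vertex $v_i$ has two already-coloured, mutually adjacent neighbours $u_i,u_{i+1}$ in the row above and the freshly coloured $v_{i-1}$ in its own row, so the combinatorial core is again a statement of the form ``for every admissible pair of colours available to $v_{i-1}$ and every sign pattern on the three edges at $v_i$, the vertex $v_i$ retains at least two admissible colours,'' which propagates along the row and is then resolved by a backward pass. This would be verified by a finite case analysis, the few sign patterns on $u_iv_i$, $u_{i+1}v_i$, $v_{i-1}v_i$ being cut down, via Lemma~\ref{lem:BG} applied by switching at the new-row vertices, to a handful of representatives.

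The step I expect to be the real obstacle is precisely this propagation invariant for $\rho(SP_5^+)$. With $SP_9^+$ every vertex of $\rho(SP_9^+)$ has nine positive neighbours and $P_{2,4}$ holds, leaving ample slack; with $SP_5^+$ each vertex of $\rho(SP_5^+)$ has only five positive neighbours, so ``two admissible colours propagate to two admissible colours'' may simply fail for some local sign pattern. Overcoming this is likely to require one or more of: carrying forward a finer invariant than a mere count, for instance the precise unordered pair of surviving colours (as is already done implicitly in the proof of Lemma~\ref{lemline width=0.7mmSP9+}); choosing a cleverer traversal of the grid so that the two coloured neighbours of each new vertex always sit in an ``easy'' local configuration; or, should no sweep order work, a structural argument exploiting the arithmetic of $SP_5$ directly instead of a greedy sweep. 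One should also check at the outset whether $SP_5^+$ is genuinely a universal target for all signed triangular grids, since the example of Figure~\ref{fig:conject} only confirms that it suffices when every $C_4$ is unbalanced; as $SP_5^+$ is essentially the most accommodating signed graph on six vertices, a failure here would be evidence that the conjecture itself should be revisited.

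For the lower bound the cleanest route is a reduction to signed square grids. Deleting one of the three edge-directions from the triangular lattice yields exactly the square lattice, so the underlying graph of any $m\times n$ square grid occurs as a (non-induced) subgraph of the triangular grid on the same vertex set, the extra edges being the diagonals of the squares. Hence, given a signed square grid $S$ with $\chi_s(S)=6$, build the signed triangular grid $G$ on the same vertices by keeping the signs of $S$ on the horizontal and vertical edges and signing the diagonals arbitrarily; then $S$ is a sign-preserving subgraph of $G$, and since $\chi_s$ is monotone under sign-preserving subgraphs we get $\chi_s(G)\ge\chi_s(S)=6$. Together with the upper bound this gives the equality. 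The reduction shifts the burden to the statement that signed square grids have chromatic number exactly $6$; the bound $\le 6$ is Theorem~\ref{thm:art}(\ref{thm:art-2Dgrid}), and the matching lower bound is either on record in \cite{Grid} or can be settled by a finite verification: exhibit one explicit signed square grid and check, by hand or by computer (there being only finitely many signed graphs on $5$ vertices up to switching and isomorphism, equivalently finitely many antitwinned $2$-edge-colored graphs on $10$ vertices), that it admits no homomorphism to any of them.
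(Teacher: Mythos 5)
You are addressing a statement that the paper itself leaves as a conjecture: the paper offers no proof, only two pieces of partial evidence, namely (i) the lower bound $\chi_s \geq 6$, proved directly by exhibiting inside the triangular grid a wheel on $7$ vertices in which every $C_4$ is unbalanced and checking, after normalizing by switching (rim alternating in sign, spokes positive) and ruling out antitwinned colors on the wheel, that it admits no homomorphism to any antitwinned graph on $10$ vertices, hence to no signed target of order $5$; and (ii) the fact that the subclass of triangular grids in which every $C_4$ is unbalanced does admit a $6$-coloring, via the periodic coloring of Figure~\ref{fig:conject} with target $SP_5^+$ and Proposition 3.2 of \cite{Z}. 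Measured against that, your proposal does not close the gap either. For the upper bound you explicitly leave the decisive step unproven — the propagation invariant ``two admissible colors at $v_{i-1}$ yield two admissible colors at $v_i$'' for $\rho(SP_5^+)$ — and you correctly anticipate that it may fail for some local sign patterns; that unproven step is precisely the open content of the conjecture, so this half of your argument is a research plan, not a proof.

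Your lower-bound route also contains a genuine gap. The subgraph step is fine ($\chi_s$ is monotone under sign-preserving subgraphs, and any signed square grid extends to a signed triangular grid by signing the diagonals arbitrarily), but the reduction needs the statement that some signed square grid has chromatic number exactly $6$, which is established nowhere: Theorem~\ref{thm:art}(\ref{thm:art-2Dgrid}) is only an upper bound, and your fallback of ``a finite verification'' is a hope, not an argument — if the true value for signed square grids were $5$, your reduction would only yield $\chi_s \geq 5$ for triangular grids. The paper's own lower bound deliberately avoids this dependency by using a configuration that exists in the triangular grid but not in the square grid, the $7$-vertex wheel with all $C_4$ unbalanced, and verifying non-$5$-colorability by a short case analysis combined with Lemma~\ref{lem:BG}. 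If you want a correct, self-contained lower bound, reproduce that wheel argument rather than routing through square grids.
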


Note that the chromatic number of signed triangular grids is at least $6$ since a wheel on $7$ vertices such that every $C_4$ is unbalanced cannot be colored with $5$ colors. To prove it, let $G$ be a $2$-edge-colored wheel with vertex set $\{u, u_1, ..., u_6\}$ and center $u$, and let $T$ be an antitwinned graph of vertex set $\{0, \overline{0}, 1, \overline{1}, ..., 4, \overline{4}\}$ where $\overline{i}$ is the antitwin of $i$ such that $G$ admits a homomorphism $\varphi$ to $T$. By Proposition 3.2 of \cite{Z}, it is possible to switch some of the vertices of $G$ such that the outer cycle alternates between positive and negative edges and every edge incident to $u$ is positive. Assume w.l.o.g. that $\varphi(u) = 0$ (note that we can relabel the vertices of $T$ if needed). We now show that we cannot color a pair of vertices $(v_1, v_2)$ of $G$ with antitwins. Suppose $v_1$ and $v_2$ have colors that are antitwinned. If $v_1$ or $v_2=u$ then we have a contradiction because $v_1$ and $v_2$ are adjacent. Otherwise, $v_1$ and $v_2$ are both positively adjacent to $u$ which also gives us a contradiction. Therefore we can assume w.l.o.g. that we do not need to use colors $\overline{0}, \overline{1}, ..., \overline{4}$. W.l.o.g. let $\varphi(u_1)=1$, $\varphi(u_2)=2$ and $\varphi(u_3)=3$. We can collor $u_4$ in either $1$ or $4$. Suppose we color $u_4$ with $1$. We have to color $u_5$ in $4$ and we then cannot color $u_6$. Suppose we color $u_4$ with $4$. We can color $u_5$ with $1$ or $2$ but both possibilities do not allow us to color $u_6$.  Therefore $G$ does not admit a homomorphism to $T$. We conclude by using Lemma~\ref{lem:BG}.

 \bibliographystyle{unsrt}
 \bibliography{main}

\end{document}